\newcommand\cC{{\mathcal C}}
\newcommand\cF{{\mathcal F}}
\newcommand\cN{{\mathcal N}}
\newcommand\cT{{\mathcal T}}
\newtheorem*{rep@theorem}{\rep@title}
\newcommand{\newreptheorem}[2]{%
\newenvironment{rep#1}[1]{%
 \def\rep@title{#2 \ref{##1}}%
 \begin{rep@theorem}}%
 {\end{rep@theorem}}}
\theoremstyle{plain}
\newtheorem{theorem}{Theorem}[section]
\newtheorem{lemma}[theorem]{Lemma}
\newtheorem{conjecture}[theorem]{Conjecture}
\newtheorem{proposition}[theorem]{Proposition}
\theoremstyle{definition}
\newtheorem{claim}[theorem]{Claim}
\newcommand\cref[1]{Corollary~\ref{cor:#1}}
\title{Triangles in intersecting families}
\author{D\'aniel T. Nagy}
\address{Alfr\'ed R\'enyi Institute of Mathematics}
\email{nagydani@renyi.hu}
\thanks{Nagy's research is partially supported by NKFIH grants FK 132060 and PD 137779 and by the J\'anos Bolyai Research Fellowship of the Hungarian Academy of Sciences}
\author{Bal\'azs Patk\'os}
\address{Alfr\'ed R\'enyi Institute of Mathematics and Moscow Institute of Physics and Technology}
\email{patkos@renyi.hu}
\thanks{Patk\'os's research is partially supported by NKFIH grants SNN 129364 and FK 132060.}
\date{}
\begin{document}

\maketitle

\begin{abstract}
    We prove the following the generalized Tur\'an type result. A collection $\cT$ of $r$ sets is an $r$-triangle if for every $T_1,T_2,\dots,T_{r-1}\in \cT$ we have $\cap_{i=1}^{r-1}T_i\neq\emptyset$, but $\cap_{T\in \cT}T$ is empty. A family $\cF$ of sets is $r$-wise intersecting if for any $F_1,F_2,\dots,F_r\in \cF$ we have $\cap_{i=1}^rF_i\neq \emptyset$ or equivalently if $\cF$ does not contain any $m$-triangle for $m=2,3,\dots,r$. We prove that if $n\ge n_0(r,k)$, then the $r$-wise intersecting family $\cF\subseteq \binom{[n]}{k}$ containing the most number of $(r+1)$-triangles is isomorphic to $\{F\in \binom{[n]}{k}:|F\cap [r+1]|\ge r\}$.
\end{abstract}

\section{Introduction}
Tur\'an type problems form one of the most studied areas in extremal combinatorics. They ask for the maximum size of a combinatorial structure that avoids some forbidden substructure. Most common examples are the following: the maximum number of edges in an $H$-free graph on $n$ vertices, the maximum number of sets in $2^{[n]}$ that avoids some inclusion pattern $P$, the maximum number of sets in in $2^{[n]}$ satisfying some intersection property. Recently, so called \textit{generalized} Tur\'an problems have attracted the attention of researchers, especially in the domain of extremal graph theory. Given a forbidden graph $H$, what is the maximum number of copies of a fixed graph $F$ among $n$-vertex $H$-free graphs $G$? A rapidly growing literature addresses problems of this type.

There exist some results of similar flavor concerning uniform intersecting families \cite{FK, FKK1, FKK2} or  the union of two intersecting families \cite{P}. In this short note, we consider $k$-uniform intersecting families $\cF\subseteq \binom{[n]}{k}:=\{F\subseteq \{1,2,\dots,n\}:|F|=k\}$. By definition, if $\cF$ is intersecting, then it cannot contain a pair of disjoint sets. But what is the maximum number of triples $F,F',F''$ that a family $\cF\subseteq \binom{[n]}{k}$ can contain with $F\cap F' \cap F''=\emptyset$? Such triples are called \textit{triangles}. A natural and well-known candidate of an intersecting family with many triangles is the following. For any 3-subset $X$ of $[n]$, we define $\cF_{X,k}=\{F\in \binom{[n]}{k}: |F\cap X|\ge 2\}$ and $\cF'_{X,k}=\{F\in \binom{[n]}{k}: |F\cap X|= 2\}$. We write $n_{3,k}$ to denote the number of triangles in $\cF_{X,k}$. 

\begin{theorem}\label{tria}
For every $k\ge 2$ if $\cF\subseteq \binom{[n]}{k}$ is intersecting with $n\ge 4k^6$, then the number of triangles in $\cF$ is at most $n_{3,k}$ and equality holds if and only if $\cF'_{X,k}\subseteq \cF\subseteq \cF_{X,k}$ for some 3-subset $X$ of $[n]$.
\end{theorem}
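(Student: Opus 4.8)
The plan is to reduce the triangle count to a clean bilinear identity over the possible pairwise intersections, and then to exploit the intersecting property through the links of single vertices. First I would record why $\cF_{X,k}$ behaves as claimed for $X=\{a,b,c\}$: if a triangle contained a set $F\supseteq X$, the other two sets meet $X$ in at least two points each, hence share a point of $X$, which also lies in $F$, so the triple intersection would be nonempty. Thus every triangle uses three sets meeting $X$ in exactly two points, and to jointly miss each point of $X$ it must use exactly one set of each ``type''. Writing each such set as a pair from $X$ together with a tail in $\binom{[n]\setminus X}{k-2}$, a triangle is exactly a triple of tails with empty common intersection, so $n_{3,k}=\binom{n-3}{k-2}^3-\big(\text{tail-triples through a common point}\big)=\binom{n-3}{k-2}^3-O\!\big(n^{3k-8}\big)$.

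For the upper bound, for a nonempty set $S$ let $P(S)$ be the number of unordered pairs $\{A,B\}\subseteq\cF$ with $A\cap B=S$, and let $g(S)=|\{C\in\cF:C\cap S=\emptyset\}|$. If $A\cap B=S$ and $C\cap S=\emptyset$, then $A\cap B\cap C=\emptyset$ while $A\cap C$ and $B\cap C$ are nonempty because $\cF$ is intersecting; hence $\{A,B,C\}$ is a triangle. Since the three pairwise intersections of a triangle are pairwise disjoint and nonempty, each triangle arises in this way from each of its three pairs, and from no pair twice, so
\[
 t(\cF)=\tfrac13\sum_{S}P(S)\,g(S).
\]
This identity is exact, which is essential: for a star with centre $v_0$ every pair meets in a set containing $v_0$, so $P(S)>0$ forces $v_0\in S$ and hence $g(S)=0$, correctly recovering $t=0$, whereas any bound that drops the empty-intersection condition overshoots by a factor of $n$.

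Next I would show the terms with $|S|\ge2$ are lower order: for $|S|=s$ one has $P(S)\le\binom{d(S)}{2}$ with $d(S)\le\binom{n-s}{k-s}$ and $g(S)\le|\cF|\le\binom{n-1}{k-1}$, and a more careful count (using that the third set must meet both $A$ and $B$ outside $S$) can be pushed to $\sum_{|S|\ge2}P(S)g(S)=O(n^{3k-7})$ once $n\ge 4k^6$. For a singleton $S=\{x\}$ I would pass to the link $L_x=\{A\setminus\{x\}:x\in A\in\cF\}$: then $P(\{x\})$ is exactly the number of disjoint pairs in $L_x$, while $g(\{x\})$ counts the members of $\cF$ avoiding $x$, each of which is a transversal of $L_x$. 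Bounding the product $P(\{x\})g(\{x\})$ is the heart of the argument: many disjoint pairs force $L_x$ to be ``spread'', which severely limits the number of common transversals avoiding $x$. Carrying this out should give $\sum_x P(\{x\})g(\{x\})\le 3\binom{n-3}{k-2}^3(1+o(1))$, with near-equality only when the positive contributions come from exactly three vertices $a,b,c$ whose links are concentrated on the pairs of $X=\{a,b,c\}$.

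This yields $t(\cF)\le(1+o(1))n_{3,k}$ together with stability: a single $3$-set $X$ carries almost all triangles, so $\cF$ is essentially contained in $\cF_{X,k}$. To upgrade to the exact bound $t(\cF)\le n_{3,k}$ and the characterisation $\cF'_{X,k}\subseteq\cF\subseteq\cF_{X,k}$, I would run a local exchange argument: using $n\ge 4k^6$, show that each set of $\cF$ lying outside $\cF_{X,k}$ destroys more triangles than it creates, and that each set of $\cF'_{X,k}$ missing from $\cF$ strictly lowers the count. The main obstacle is precisely this passage from the leading-order trade-off to an exact optimum: the link estimate must be sharp enough to forbid any gain from spreading triangles over several centres or from using pairwise intersections of size $\ge 2$, and the lower-order error terms must be controlled finely enough, which is where the polynomial hypothesis $n\ge 4k^6$ is spent.
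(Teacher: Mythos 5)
Your setup is sound as far as it goes: the identity $3\,t(\cF)=\sum_S P(S)g(S)$ is correct (each triangle is counted once per pair, and every pair--third-set configuration with $C\cap(A\cap B)=\emptyset$ is genuinely a triangle because $\cF$ is intersecting), and your description of the triangles of $\cF_{X,k}$ agrees with the paper's Proposition \ref{ordmag}. But from there on the proposal is a plan rather than a proof, and the steps you leave open are exactly where the difficulty lives. First, the claim $\sum_{|S|\ge2}P(S)g(S)=O(n^{3k-7})$ does not follow from the hint you give: the number of pairs with $|A\cap B|\ge 2$ can a priori be of order $n^{2k-2}$, and the restriction that the third set meets $A\setminus S$ and $B\setminus S$ only caps the thirds at $O(k^2n^{k-2})$ per pair, giving $O(n^{3k-4})$ --- two to three factors of $n$ too many. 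Note that $g(S)>0$ precisely when $S$ is \emph{not} a cover of $\cF$, so controlling which $S$ have both $P(S)>0$ and $g(S)>0$ is exactly a covering-number analysis, i.e.\ the structural work you have not done. Second, the ``heart of the argument'' --- $\sum_x P(\{x\})g(\{x\})\le(3+o(1))\binom{n-3}{k-2}^3$ with a stability statement --- is offered only as a hope (``carrying this out should give''); the assertion that a link with many disjoint pairs has few transversals avoiding $x$ is precisely the kind of cover/degree trade-off that must be proved, and no argument is given. Third, even granting both estimates you only reach $t(\cF)\le(1+o(1))n_{3,k}$, and the upgrade to the exact bound via ``local exchange'' is both unproven and misstated: adding or keeping a set never \emph{destroys} triangles; rather, the presence of $F\notin\cF_{X,k}$ excludes from $\cF$ (via the intersecting condition) many sets of $\cF'_{X,k}$, and one must show the triangles thereby forfeited outnumber those through $F$. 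That comparison is itself a nontrivial counting argument.

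For contrast, the paper avoids asymptotics and stability entirely. After reducing to a maximal intersecting family, it analyzes $\tau(\cF)$: if $\tau(\cF)=1$ there are no triangles; if $\tau(\cF)\ge3$ then $|\cF|\le k^3\binom{n-3}{k-3}$ by Proposition \ref{tau}, so even $\binom{|\cF|}{3}$ falls below $n_{3,k}$; and if $\tau(\cF)=2$, any two $2$-element covers intersect (Claim \ref{cover}), so the $2$-covers form a star or a triangle, the star cases are eliminated by direct counting, and the triangle case forces $\cF\subseteq\cF_{X,k}$ exactly, with equality requiring $\cF'_{X,k}\subseteq\cF$. If you want to salvage your bilinear framework, you should expect to re-prove essentially these cover-structure facts inside it; as written, your proposal relocates the difficulty rather than resolving it.
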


More generally, one can consider \textit{$r$-wise intersecting families}. $\cF$ has this property if for any $F_1,F_2,\dots, F_r\in \cF$, we have $\cap_{i=1}^rF_i\neq \emptyset$. As proved by Frankl \cite{F}, the maximum size of an $r$-wise intersecting family $\cF\subseteq \binom{[n]}{k}$ is $\binom{n-1}{k-1}$ whenever $\frac{r-1}{r}n\ge k$ holds. The extremal family is unique unless $r=2$ and $n=2k$: all $k$-sets containing a fixed element of the ground set. The $r$-wise intersecting property can be formulated via forbidden configurations. A family $\cT$ of $k$-sets with $\cap_{T\in \cT}T=\emptyset$ is an $r$-triangle if $|\cT|=r$ and for any $T_1,T_2,\dots,T_{r-1}\in\cT$ we have $\cap_{i=1}^{r-1}T_i\neq \emptyset$. Let $\Delta_{r,k}$ denote the set of all $k$-uniform $r$-triangles. Then $\cF\subseteq \binom{[n]}{k}$ is $r$-wise intersecting if and only if it is $\Delta_{m,k}$-free for all $2\le m \le r$. Therefore, a natural generalization of Theorem \ref{tria} would be to maximize the number of $(r+1)$-triangles in $r$-wise intersecting families.

For any $X\subset [n]$ and integer $k$ we define $\cF_{X,k}=\{F\in \binom{[n]}{k}:|F\cap X|\ge |X|-1\}$ and $\cF'_{X,k}=\{F\in \binom{[n]}{k}:|F\cap X|= |X|-1\}$. Finally, we write $n_{r+1,k}$ to denote the number of $(r+1)$-triangles in $\cF_{X,k}$ with $|X|=r+1$.

\begin{theorem}\label{gener}
For every $k\ge r\ge 3$  if $\cF\subseteq \binom{[n]}{k}$ is $r$-wise intersecting with $n\ge 4k^{r(r+1)}$, then the number of $(r+1)$-triangles in $\cF$ is at most $n_{r+1,k}$ and equality holds if and only if $\cF'_{X,k}\subseteq\cF\subseteq\cF_{X,k}$ for some $(r+1)$-subset $X$ of $[n]$.
\end{theorem}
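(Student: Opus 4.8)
The plan is to prove the lower bound by exhibiting the extremal construction, and the upper bound by extracting from every $(r+1)$-triangle a canonical \emph{kernel} and showing that the $r$-wise intersecting hypothesis forces essentially all of $\cF$ to respect a single kernel. First I would settle the construction together with the equality direction. Checking that $\cF_{X,k}$ with $|X|=r+1$ is $r$-wise intersecting is a pigeonhole argument: any $r$ members each omit at most one point of $X$, so together they omit at most $r<|X|$ points and hence share a point of $X$. Next I would observe that in $\cF_{X,k}$ every $(r+1)$-triangle consists of sets meeting $X$ in exactly $r$ points (a set containing all of $X$ cannot omit a kernel point), so the triangles already live inside $\cF'_{X,k}$; this simultaneously pins down the count $n_{r+1,k}$ and shows that any $\cF$ with $\cF'_{X,k}\subseteq\cF\subseteq\cF_{X,k}$ has exactly $n_{r+1,k}$ triangles.

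For the upper bound, the key structural observation is that every $(r+1)$-triangle $\{T_1,\dots,T_{r+1}\}$ carries pairwise disjoint nonempty \emph{petals} $K_i:=\bigcap_{j\ne i}T_j$, each disjoint from $T_i$ (a point common to $K_i$ and $T_i$ would lie in $\bigcap_j T_j=\emptyset$). Choosing $p_i\in K_i$ yields an $(r+1)$-set $P=\{p_1,\dots,p_{r+1}\}$ with $T_i\cap P=P\setminus\{p_i\}$, so on its kernel every triangle looks exactly like a canonical triangle of $\cF_{P,k}$. Now I fix one triangle and feed in the $r$-wise intersecting hypothesis: for each pair $\{p_a,p_b\}\subseteq P$ and each $G\in\cF$, the $r$ sets $\{G\}\cup\{T_i:i\notin\{a,b\}\}$ must share a point, and that point lies in $\bigcap_{i\notin\{a,b\}}T_i$. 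When the chosen triangle is \emph{clean}, meaning every $(r-1)$-fold intersection $\bigcap_{i\notin\{a,b\}}T_i$ equals exactly $\{p_a,p_b\}$, this forces $G$ to meet $\{p_a,p_b\}$; ranging over all pairs shows $G$ omits at most one point of $P$, i.e. $\cF\subseteq\cF_{P,k}$. Monotonicity of the triangle count under inclusion then gives at most $n_{r+1,k}$, and tracking when no triangle is lost forces $\cF'_{P,k}\subseteq\cF$ (each set of $\cF'_{P,k}$ lies in a triangle once $n$ is large), closing the equality case.

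The main obstacle is therefore to guarantee that a clean triangle is available, or to bypass its absence. A single dirty triangle only shows that the ``bad'' sets of $\cF$, those omitting two points $p_a,p_b$ of $P$, must each meet the bounded extra set $Y:=\bigcup_{a<b}\bigl(\bigcap_{i\notin\{a,b\}}T_i\setminus\{p_a,p_b\}\bigr)$, of size at most $\binom{r+1}{2}k$; but the naive ``each extra coincidence costs a factor of $n$'' estimate on dirty triangles is too weak, since it over-counts triangles through a bad set by a factor $n^{r}$ and so exceeds $n_{r+1,k}$. To overcome this I plan to choose the triangle optimally, for instance minimizing the total support $\bigl|\bigcup_i T_i\bigr|$, and to combine this with a $\Delta$-system/sunflower analysis of $\cF$ organized by its covering number $\tau(\cF)$: when $\tau(\cF)$ is small the triangles are pinned to a bounded list of candidate kernels and a direct count beats $n_{r+1,k}$, while the large-covering-number regime is excluded for $n\ge 4k^{r(r+1)}$ by the sunflower bound. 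The delicate quantitative heart is showing that triangles of large support, or with a non-singleton petal, contribute only a lower-order term, so that a family carrying $\ge n_{r+1,k}$ triangles must already contain a clean one; a final exchange argument comparing the kernels of two clean triangles then forces $P$, and hence $X$, to be unique.
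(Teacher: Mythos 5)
Your first two ingredients are sound: the verification that all $(r+1)$-triangles of $\cF_{X,k}$ lie in $\cF'_{X,k}$ (so that every family between $\cF'_{X,k}$ and $\cF_{X,k}$ has exactly $n_{r+1,k}$ triangles), and the observation that a \emph{clean} triangle --- one whose $(r-1)$-fold intersections $\bigcap_{i\notin\{a,b\}}T_i$ are exactly the pairs $\{p_a,p_b\}$ --- forces $\cF\subseteq\cF_{P,k}$ via the $r$-wise intersecting property applied to $\{G\}\cup\{T_i:i\notin\{a,b\}\}$. But the proof then stands or falls on producing a clean triangle inside an arbitrary $r$-wise intersecting family with at least $n_{r+1,k}$ triangles, and this is precisely the step you do not carry out: you name it yourself as ``the delicate quantitative heart'' and offer only directions (minimizing $\bigl|\bigcup_i T_i\bigr|$, a sunflower analysis, a case split on $\tau(\cF)$) without any actual estimate. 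As you correctly note, the naive count of dirty triangles overshoots by a factor of order $n^{r}$, so nothing in the proposal rules out a hypothetical family all of whose triangles are dirty yet which has $\ge n_{r+1,k}$ of them. This is a genuine gap, not a routine verification: the entire difficulty of the theorem is concentrated there.

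It is instructive to compare with how the paper sidesteps the issue. The paper never looks for a good triangle inside $\cF$; instead it analyzes the family $\cC_\cF\cap\binom{[n]}{2}$ of $2$-element covers. After disposing of $\tau(\cF)\ge 3$ (via Proposition \ref{rwisetau} plus the cross-intersecting bound of Proposition \ref{cross}) and $\tau(\cF)=1$ (no triangles at all), one may assume $\tau(\cF)=2$, and three lemmas show that the graph $G$ of $2$-covers has no induced path on three vertices, no vertex of degree exceeding $r$, and no nontrivial component on fewer than $r+1$ vertices --- in each bad case the triangle count is beaten by direct counting, using that \emph{every} triangle must contain sets avoiding prescribed cover elements. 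Hence $G$ is a disjoint union of $(r+1)$-cliques, and any such clique $X$ immediately gives $\cF\subseteq\cF_{X,k}$. In effect, the clique of $2$-covers plays the role of your kernel $P$, and the ``cleanness'' you need is obtained for free from covers rather than from a fortunate triangle; the counting lemmas on the cover graph are exactly the quantitative substitute your plan is missing. If you want to salvage your approach, you would need an analogous case analysis (most naturally, again through the covering number) to control families in which every triangle is dirty --- at which point you will likely have reproduced the paper's argument.
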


It is not hard to see and  we will show after Proposition \ref{rwisetau}, if $r>k$, then every $r$-wise intersecting $k$-uniform family $\cF$ is trivial, i.e. all sets in $\cF$ share a common element and thus $\cF$ does not contain any $\ell$-triangles for any $\ell\ge 2$. Therefore Theorem \ref{gener} covers all interesting cases.

\section{Preliminaries}
 
 In this section, we gather some easy statements that we will need during the proofs. 

\begin{proposition}\label{ordmag}
For any fixed $2\le r\le k$ and $n\ge k^2$ we have $$n_{r+1,k}\ge \frac{1}{2}\binom{n-r-1}{k-r}^{r+1}\ge \frac{(n-1)^{r+1}}{2k^{r+1}}.$$
\end{proposition}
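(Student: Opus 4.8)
The plan is to exhibit enough $(r+1)$-triangles inside $\cF_{X,k}$ for a fixed $(r+1)$-set $X=\{x_1,\dots,x_{r+1}\}$ and then estimate how many of them survive the requirement that their common intersection be empty. For each ordered tuple $(A_1,\dots,A_{r+1})$ of $(k-r)$-subsets of $[n]\setminus X$ I set $F_i=(X\setminus\{x_i\})\cup A_i$; then $|F_i|=k$ and $|F_i\cap X|=r=|X|-1$, so $F_i\in\cF_{X,k}$. First I would verify that $\{F_1,\dots,F_{r+1}\}$ is an $(r+1)$-triangle precisely when $\bigcap_{i}A_i=\emptyset$: deleting $F_j$ leaves $r$ sets that all contain $x_j$, so every $r$-wise intersection is nonempty, whereas $\bigcap_i F_i$ contains no $x_j$ (since $x_j\notin F_j$) and meets $[n]\setminus X$ exactly in $\bigcap_i A_i$. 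Because the unique element of $X$ missing from a set records its index, distinct admissible tuples yield distinct triangles, so it suffices to lower bound the number of tuples with $\bigcap_i A_i=\emptyset$.

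Next I would count such tuples by a complementary union bound. With $m=n-r-1=|[n]\setminus X|$ the number of all tuples is $\binom{m}{k-r}^{r+1}$, while the tuples whose parts all contain a fixed element $y$ number $\binom{m-1}{k-r-1}^{r+1}$; summing over the $m$ choices of $y$ bounds the number of tuples with nonempty common intersection by $m\binom{m-1}{k-r-1}^{r+1}$. Substituting $\binom{m-1}{k-r-1}=\frac{k-r}{m}\binom{m}{k-r}$ rewrites this bad count as $\frac{(k-r)^{r+1}}{m^{r}}\binom{m}{k-r}^{r+1}$, so the number of admissible tuples, and hence $n_{r+1,k}$, is at least $\bigl(1-\frac{(k-r)^{r+1}}{m^{r}}\bigr)\binom{m}{k-r}^{r+1}$.

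The only genuinely delicate point is to check $\frac{(k-r)^{r+1}}{m^{r}}\le\frac12$, i.e. $m^{r}\ge 2(k-r)^{r+1}$, using $m=n-r-1\ge k^2-r-1\ge k^2-k-1$; here I would avoid the lossy bound $(k-r)^{r+1}\le k^{r+1}$ and instead write $2(k-r)^{r+1}\le 2k(k-2)^{r}$, so that it suffices to have $\bigl(\frac{k^2-k-1}{k-2}\bigr)^{r}\ge 2k$, which follows from $\frac{k^2-k-1}{k-2}>k$ together with $k^{r}\ge k^2\ge 2k$. This yields the first inequality $n_{r+1,k}\ge\frac12\binom{n-r-1}{k-r}^{r+1}$. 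For the second inequality I would take $(r+1)$-th roots and reduce to $\binom{n-r-1}{k-r}\ge\frac{n-1}{k}$, deducing it from $\binom{n-r-1}{k-r}\ge\frac{n-r-1}{k-r}$ and the elementary inequality $\frac{n-r-1}{k-r}\ge\frac{n-1}{k}$ (equivalent to $n\ge k+1$, which $n\ge k^2$ guarantees). These last two steps use $k>r$; the degenerate case $k=r$, in which $\cF_{X,k}$ carries a single $(r+1)$-triangle, is checked separately.
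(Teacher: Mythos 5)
Your argument is essentially the paper's own proof, written out with more care: the paper constructs the same tuples (one set $F$ with $F\cap X=X\setminus\{x\}$ for each $x\in X$) and applies the same union bound, stating it as $\binom{n-r-1}{k-r}^{r+1}-(n-r-1)\binom{n-r-2}{k-r-1}^{r+1}\le n_{r+1,k}$. Your bookkeeping is in fact tighter: you check that distinct tuples give distinct $(r+1)$-triangles and that the triangle property really is equivalent to $\bigcap_i A_i=\emptyset$ (the paper leaves this implicit), your verification of $(k-r)^{r+1}/m^{r}\le\frac12$ is cleaner than the paper's intermediate step, which silently requires $\left(\frac{k-r}{k}\right)^{r+1}\le\left(\frac{n-r-1}{n}\right)^{r}$, and you actually prove the second inequality, which the paper's proof never addresses at all.

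The one flaw is the deferred case $k=r$: the separate check you promise cannot be completed, because the second inequality of the proposition is false there. For $k=r$ one has $n_{r+1,k}=1$ and $\binom{n-r-1}{k-r}=1$, so the first inequality reads $1\ge\frac12$ and holds, but the second reads $\frac12\ge\frac{(n-1)^{r+1}}{2k^{r+1}}$, i.e. $n\le k+1$, which contradicts $n\ge k^2$. This is a defect of the proposition's statement rather than of your method, and it is harmless for the paper, whose later arguments only ever invoke $n_{r+1,k}\ge\frac12\binom{n-r-1}{k-r}^{r+1}$; still, as written your proof asserts a check that would fail, so you should either restrict the second inequality to $k>r$ or point out explicitly that it fails at $k=r$.
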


\begin{proof}
As any $(r+1)$-triangle in $\cF_{X,k}$ must contain one member of $\cF_x=\{F\in \cF_{X,k}:x\notin F\}$ for each $x\in X$ and these sets should not have a common element outside $X$, by exclusion-inclusion, we obtain the following lower bound on $n_{r+1,k}$:
$$\binom{n-r-1}{k-r}^{r+1}-(n-r-1)\binom{n-r-2}{k-r-1}^{r+1}\ge \binom{n-r-1}{k-r}^{r+1}\left[1-\frac{k^{r+1}}{n^r}\right] \ge \frac{1}{2}\binom{n-r-1}{k-r}^{r+1}.$$
\end{proof}

The \textit{covering number} $\tau(\cF)$ of a family $\cF$ of sets is the smallest size of a \textit{cover} set $X$ with $X\cap F\neq \emptyset$ for all $F\in \cF$. The family of covers of $\cF$ is denoted by $\cC_\cF$. 

\begin{proposition}\label{rwisetau}
If $\cF$ is $r$-wise intersecting with $\tau(\cF)=t$, then $\cF$ is $[(r-2)(t-1)+1]$-intersecting.
\end{proposition}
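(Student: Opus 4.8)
The plan is to prove the apparently stronger pointwise statement that every pair $A,B\in\cF$ satisfies $|A\cap B|\ge (r-2)(t-1)+1$, proceeding by induction on $r$. For $r=2$ the claim reads $|A\cap B|\ge 1$, which is just the intersecting property, so there is nothing to prove. Throughout, an $\ell$-intersecting family means one in which any two members meet in at least $\ell$ points.

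For the inductive step assume $r\ge 3$, fix $A,B\in\cF$, and write $D=A\cap B$, $s=|D|$. First I record that $s\ge t-1$: if $|D|<t$ then $D\notin\cC_\cF$, so some $C\in\cF$ is disjoint from $D$, whence $A,B,C$ form a subfamily of size at most $3\le r$ with empty intersection, contradicting that $\cF$ is $r$-wise (hence $3$-wise) intersecting. The crucial step is the following claim: there is a $C_0\in\cF$ with $|D\setminus C_0|\ge t-1$, i.e.\ some member of $\cF$ avoids at least $t-1$ elements of $D$. Suppose not, so that $|D\setminus C|\le t-2$ for every $C\in\cF$. Pick any $Y\subseteq D$ with $|Y|=t-1$, possible since $s\ge t-1$. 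For every $C\in\cF$ we then have $Y\not\subseteq D\setminus C$, because $|Y|=t-1>t-2\ge|D\setminus C|$, so $Y\cap C\neq\emptyset$. Hence $Y\in\cC_\cF$ is a cover of size $t-1<t=\tau(\cF)$, a contradiction. This covering-number argument is the heart of the proof: it is what guarantees the extra $t-1$ ``fresh'' elements per inductive layer.

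Now fix such a $C_0$ and pass to the trace family $\cF'=\{F\cap C_0:F\in\cF\}$ on the ground set $C_0$. Then $\cF'$ is $(r-1)$-wise intersecting, since the intersection of any $r-1$ of its members equals the intersection of the corresponding $r-1$ members of $\cF$ together with $C_0$, a collection of at most $r$ members of $\cF$, which is nonempty. Moreover $\tau(\cF')\ge t$: any cover $X$ of $\cF'$ meets every $F\cap C_0$, hence meets every $F\in\cF$, so $X\in\cC_\cF$ and $|X|\ge t$. By the induction hypothesis applied to $\cF'$ (whose covering number is at least $t$, and the hypothesis is monotone in this parameter), any two of its members meet in at least $(r-3)(t-1)+1$ points; in particular $|A\cap B\cap C_0|=|(A\cap C_0)\cap(B\cap C_0)|\ge (r-3)(t-1)+1$. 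Combining this with $|D\setminus C_0|\ge t-1$ and the partition $|D|=|D\cap C_0|+|D\setminus C_0|$ yields
$$|A\cap B|\ge\big((r-3)(t-1)+1\big)+(t-1)=(r-2)(t-1)+1,$$
which closes the induction.

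The main obstacle is the crucial claim in the second paragraph; everything else is bookkeeping. A secondary point to verify carefully is that the trace family $\cF'$ inherits both the $(r-1)$-wise intersecting property and a covering number at least $t$, so that the induction hypothesis applies with the same $t$. Degenerate coincidences (for instance $A\cap C_0=B\cap C_0$, or $\cF'$ collapsing to a single member) only occur when every set of $\cF$ contains $C_0$, which forces $\tau(\cF)=1$ and hence makes the target bound trivial, so they cause no difficulty.
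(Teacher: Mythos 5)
Your proof is correct, but it takes a genuinely different route from the paper's. The paper argues in one step, by contrapositive: if $|F\cap F'|\le (r-2)(t-1)$, partition $X=F\cap F'$ into $r-2$ blocks $X_1,\dots,X_{r-2}$ of size at most $t-1$; since $\tau(\cF)=t$, no block is a cover, so for each $i$ there is $F_i\in\cF$ disjoint from $X_i$, and then the $r$ sets $F,F',F_1,\dots,F_{r-2}$ have empty intersection, a contradiction. You instead induct on $r$, extracting the blocks one layer at a time: your key claim produces a single $C_0\in\cF$ missing at least $t-1$ elements of $D=A\cap B$, and you pass to the trace family $\{F\cap C_0:F\in\cF\}$, which you correctly verify is $(r-1)$-wise intersecting with covering number still at least $t$ (this monotone reformulation in $t$ is also legitimate). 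Note that your key claim admits a shorter proof in the paper's own style: any $Y\subseteq D$ with $|Y|=t-1$ is too small to be a cover, so some $C_0\in\cF$ is disjoint from $Y$, giving $|D\setminus C_0|\ge t-1$ directly. What the paper's argument buys is brevity (four lines, no induction); what yours buys is the reusable observation that tracing onto a member preserves the covering number while decreasing the wise-ness by exactly one, which makes the arithmetic $(r-2)(t-1)+1=\bigl((r-3)(t-1)+1\bigr)+(t-1)$ transparent.

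One blemish: the justification in your closing remark is false. The coincidence $A\cap C_0=B\cap C_0$ does not force every member of $\cF$ to contain $C_0$ (take $A=\{1,2\}$, $B=\{1,3\}$, $C_0=\{1,4\}$). Fortunately no patch is needed: the paper's definition of $r$-wise intersecting quantifies over not-necessarily-distinct members, so it is natural to read ``$\ell$-intersecting'' for not-necessarily-distinct pairs as well, and your induction proves exactly this stronger pointwise statement (the base case then asserts that no member is empty, which holds, and the inductive step never requires $A\cap C_0\neq B\cap C_0$). Alternatively, if $S:=A\cap C_0=B\cap C_0$ with $S\neq C_0$, apply the inductive hypothesis to the distinct pair $S,C_0\in\cF'$, using that $C_0$ itself lies in the trace family.
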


\begin{proof}
Suppose not and let $F,F'\in \cF$ be two sets with $X=F\cap F'$ of size at most $(r-2)(t-1)$. Then one can partition $X$ to $X_1,X_2,\dots,X_{r-2}$ with $|X_i|\le t-1$ for $i$. As $\tau(\cF)=t$, none of the $X_i$'s is a cover of $\cF$, so  for all $i$ there exists $F_i\in \cF$ disjoint with $X_i$. But then $F\cap F'\cap \bigcap_{i=1}^{r-2}F_i=\emptyset$ which contradicts the $r$-wise intersecting property of $\cF$.
\end{proof}

Proposition \ref{rwisetau} implies that if $r>k$, then we have $\tau(\cF)=1$. Indeed, if $\tau(\cF)\ge 2$, then a $k$-uniform $r$-wise intersecting family $\cF$ must be $k$-intersecting, which means that $\cF$ must consist of a single $k$-set contradicting $\tau(\cF)\ge 2$.

\medskip

The next two propositions provide some very weak bounds on the size of intersecting families with respect to their covering number. Many stronger results are known (see e.g. \cite{F2,FK, FOT, FOT2,FOT3, HM}), but even those would not yield linear thresholds for Theorems \ref{tria} and \ref{gener}. On the other hand, the proofs of the propositions are so easy and short that we can include them for sake of self-containedness.

\begin{proposition}\label{tau}
If $\cF\subseteq \binom{[n]}{k}$ is an intersecting family with $\tau(\cF)\ge t$, then $|\cF|\le k^t\binom{n-t}{k-t}$.
\end{proposition}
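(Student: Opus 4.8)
The plan is to bound $|\cF|$ by covering $\cF$ with a small number of \emph{stars}. For a $t$-set $T$, write $\cS_T=\{F\in\binom{[n]}{k}:T\subseteq F\}$, so that $|\cS_T|=\binom{n-t}{k-t}$. If I can produce a family $\cP$ of at most $k^t$ many $t$-sets such that every $F\in\cF$ contains some $T\in\cP$, then $\cF\subseteq\bigcup_{T\in\cP}\cS_T$ and hence $|\cF|\le |\cP|\binom{n-t}{k-t}\le k^t\binom{n-t}{k-t}$, which is exactly the desired bound. So the whole task reduces to constructing such a small covering collection of $t$-sets.

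To produce $\cP$, I would build it up one element at a time, constructing for each $0\le s\le t$ a family $\cP_s$ of $s$-sets with $|\cP_s|\le k^s$ such that every member of $\cF$ contains some element of $\cP_s$. I start with $\cP_0=\{\emptyset\}$. Given $\cP_s$ with $s<t$, I extend each $P\in\cP_s$ as follows. Since $|P|=s<t\le\tau(\cF)$, the set $P$ is not a cover of $\cF$, so there is a member $F_P\in\cF$ with $F_P\cap P=\emptyset$. Now any $F\in\cF$ with $P\subseteq F$ meets $F_P$ because $\cF$ is intersecting, and since $F_P$ is disjoint from $P$ this forces $F$ to contain some point $y\in F_P$ with $y\notin P$. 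Hence every such $F$ contains one of the $k$ many $(s+1)$-sets $P\cup\{y\}$, $y\in F_P$. Taking $\cP_{s+1}=\{P\cup\{y\}:P\in\cP_s,\ y\in F_P\}$ gives $|\cP_{s+1}|\le k|\cP_s|\le k^{s+1}$ and preserves the covering property. Setting $\cP=\cP_t$ then finishes the argument via the union bound above.

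The one point that needs care is why each partial set extends in only $k$ ways, and this is exactly where both hypotheses enter. The naive alternative — to induct on the link $\{F\setminus\{x\}:x\in F\in\cF\}$ of a point $x$ — fails, because that link need not be intersecting: two members of $\cF$ may meet only in $x$. The greedy extension sidesteps this entirely. The assumption $\tau(\cF)\ge t$ guarantees at each stage a member $F_P$ disjoint from the current $s$-set $P$, and the intersecting property then pins every extension of $P$ inside the $k$ elements of $F_P$. I expect this extension step to be the crux; the remaining bookkeeping (the bound $|\cP_{s+1}|\le k|\cP_s|$ and the final union bound) is routine.
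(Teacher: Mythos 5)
Your proof is correct and is essentially the paper's argument in unrolled form: the paper phrases the identical idea as a degree recursion $D_m\le k\cdot D_{m+1}$ (where $D_m$ is the maximum number of members of $\cF$ containing a fixed $m$-set), proved by exactly your key step --- a set $S$ of size $m<t\le\tau(\cF)$ is not a cover, so some $F\in\cF$ is disjoint from it, and the intersecting property confines every member containing $S$ to one of the $k$ extensions $S\cup\{y\}$, $y\in F$. Your greedy families $\cP_0,\dots,\cP_t$ are just an explicit witness of that induction, so the two proofs coincide up to bookkeeping.
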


\begin{proof}
For any subset $S\subset [n]$ let the degree of $S$ be $d_\cF(S):=|\{F\in \cF:S\subseteq F\}|$ and we write $D_m$ for the maximum degree over all sets of size $m$. Observe that as long as $m<t$, we have $D_m\le k\cdot D_{m+1}$. Indeed, for any $m$-subset $S$ there exists $F\in \cF$ with $S\cap F=\emptyset$, so every $S\subseteq F'\in \cF$ must meet $F$ and thus $d_\cF(S)\le \sum_{x\in F}d_\cF(S\cup \{x\})$. Clearly, we have $D_t\le \binom{n-t}{k-t}$ and $D_0=|\cF|$. This finishes the proof.
\end{proof}

Not necessarily distinct families $\cF_1,\cF_2,\dots,\cF_m$ of sets are said to be \textit{cross-$t$-intersecting} if for any $F_1\in \cF_1,F_2\in\cF_2,\dots, F_m\in \cF_m$ we have $|\cap_{i=1}^mF_i|\ge t$.

\begin{proposition}\label{cross}
If $\cF_1,\cF_2,\dots,\cF_m\subseteq \binom{[n]}{k}$ are cross-$t$-intersecting with $\tau(\cF_i)\ge 2$ for all $i=1,2,\dots,m$, then $|\cF_i|\le \frac{k}{t}\binom{k}{t}\binom{n-t-1}{k-t-1}$ for all $i=1,2,\dots,m$.
\end{proposition}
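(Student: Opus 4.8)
The plan is to bound a single family, say $\cF_1$, since the hypotheses are symmetric in the indices. I would combine two ingredients: an averaging step that reduces the problem to bounding a maximum vertex-degree, and a degree bound that exploits $\tau(\cF_2)\ge 2$ together with the cross-$t$-intersecting property.

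First I would fix an arbitrary $A\in\cF_2$ (note $\cF_2\neq\emptyset$ since $\tau(\cF_2)\ge 2$) and write $d(x)=|\{F\in\cF_1:x\in F\}|$. As $\cF_1$ and $\cF_2$ are cross-$t$-intersecting, every $F\in\cF_1$ satisfies $|F\cap A|\ge t$, so counting incidences between $\cF_1$ and $A$ gives
\[
t\,|\cF_1|\le\sum_{F\in\cF_1}|F\cap A|=\sum_{x\in A}d(x)\le k\cdot\max_{x}d(x),
\]
using $|A|=k$. Hence it suffices to prove $\max_x d(x)\le\binom{k}{t}\binom{n-t-1}{k-t-1}$, after which dividing by $t$ yields the claimed bound.

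For the degree bound, fix a vertex $x$. Since $\tau(\cF_2)\ge 2$, the singleton $\{x\}$ is not a cover of $\cF_2$, so there is a set $B\in\cF_2$ with $x\notin B$. Every $F\in\cF_1$ with $x\in F$ meets $B$ in at least $t$ points, hence contains some $t$-subset $T\subseteq B$; because $x\notin B$ we have $x\notin T$, so $F$ contains the $(t+1)$-element set $T\cup\{x\}$. Summing over the $\binom{k}{t}$ choices of $T$ and using that at most $\binom{n-t-1}{k-t-1}$ members of $\binom{[n]}{k}$ contain a fixed $(t+1)$-set gives $d(x)\le\binom{k}{t}\binom{n-t-1}{k-t-1}$, uniformly in $x$, as required. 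Combining with the displayed inequality gives $|\cF_1|\le\frac{k}{t}\binom{k}{t}\binom{n-t-1}{k-t-1}$, and the same argument with any other family in place of $\cF_2$ bounds every $|\cF_i|$.

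The one point requiring care is recovering the factor $k/t$. A direct union bound---``every $F\in\cF_1$ contains one of the $\binom{k}{t}$ many $t$-subsets of $A$''---produces $\binom{k}{t}$ times a per-subset degree but no compensating $1/t$. Routing the estimate through the maximum degree, via $t|\cF_1|\le\sum_{x\in A}d(x)$, is exactly what supplies the division by $t$; and using a $B\in\cF_2$ that avoids $x$ to force a $(t+1)$-element set inside each $F$ (rather than only a $t$-element set) is what keeps the exponent of $n$ at $k-t-1$ rather than $k-t$.
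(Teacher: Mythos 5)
Your proof is correct and takes essentially the same route as the paper's: fix a set of $\cF_2$, use the incidence count $t|\cF_1|\le\sum_{x}d(x)$ over its $k$ elements, and bound each degree $d(x)$ by $\binom{k}{t}\binom{n-t-1}{k-t-1}$ via a set of $\cF_2$ avoiding $x$, which exists because $\tau(\cF_2)\ge 2$. The only cosmetic difference is that you route the sum through $\max_x d(x)$, whereas the paper sums the identical per-vertex bounds directly.
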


\begin{proof}
Observe first that $\tau(\cF_i)\ge 2$ implies that all $\cF_i$s are non-empty. Without loss of generality, we bound the size of $\cF_1$. Fixing $G\in \cF_2$ we have $|\cF_1|\le \frac{1}{t}\sum_{x\in G}|\{F\in \cF_1:x\in F\}|$. As $\tau(\cF_2)\ge 2$, for every $x\in G$ there exists $G_x\in \cF_2$ with $x\notin G_x$. So any $F\in \{F\in \cF_1:x\in F\}$ must meet $G_x$ in at least $t$ elements, thus $|\{F\in \cF_1:x\in F\}|\le \binom{k}{t}\binom{n-t-1}{k-t-1}$.
\end{proof}

\section{Proofs}

For any family $\cF$ we will  write $\cN(\Delta_{r+1},\cF)$ to denote the number of $(r+1)$-triangles in $\cF$.

\begin{proof}[Proof of Theorem \ref{tria}]
Let $\cF\subseteq \binom{[n]}{k}$ be an intersecting family. We can assume that  $\cF$ is maximal as adding sets to $\cF$ can only increase the number of triangles. Therefore we can assume that any $k$-set $F$ that contains a cover $C\in \cC_\cF$ belongs to $\cF$. (Note that this is true only for maximal intersecting families, but not for maximal $r$-wise intersecting families with $r\ge 3$.)

\begin{claim}\label{cover}
If $n\ge 2k$ and $C,C'\in \cC_\cF$, then $C\cap C'\neq \emptyset$.
\end{claim}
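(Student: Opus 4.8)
The plan is to argue by contradiction, leaning on the maximality fact established just above the claim: every $k$-set containing a cover belongs to $\cF$. So I would suppose $C,C'\in\cC_\cF$ are disjoint and aim to manufacture two \emph{disjoint} $k$-sets that both lie in $\cF$, contradicting that $\cF$ is intersecting. Since any $k$-set containing $C$ (respectively $C'$) is again a cover, and hence a member of $\cF$ by maximality, the whole task reduces to finding $k$-sets $A\supseteq C$ and $A'\supseteq C'$ with $A\cap A'=\emptyset$.

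To build them I would first record that the covers under consideration have size at most $k$ (a minimum cover has size $\tau(\cF)\le k$, because every member of $\cF$ is itself a cover of size $k$). Writing $R=[n]\setminus(C\cup C')$ and using $C\cap C'=\emptyset$, we get
$$|R|=n-|C|-|C'|\ge 2k-|C|-|C'|=(k-|C|)+(k-|C'|),$$
and this is precisely where the hypothesis $n\ge 2k$ is used. Hence I can choose disjoint subsets $R_1,R_2\subseteq R$ with $|R_1|=k-|C|$ and $|R_2|=k-|C'|$, and set $A=C\cup R_1$ and $A'=C'\cup R_2$. Then $|A|=|A'|=k$, and $A\cap A'=\emptyset$ since $C$, $C'$, $R_1$, $R_2$ are pairwise disjoint. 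As $A\supseteq C$ and $C$ is a cover, $A$ meets every set of $\cF$, so $A$ is a cover; being a $k$-set, maximality forces $A\in\cF$, and likewise $A'\in\cF$. The disjointness $A\cap A'=\emptyset$ then contradicts the intersecting property, which finishes the argument.

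The proof is genuinely short, so the only point that needs care — and what I would regard as the real obstacle — is the size of the covers: the extension step collapses unless $|C|,|C'|\le k$. This holds for minimum covers (size $\tau\le k$), but it cannot be dropped, since for the star $\cF=\{F\in\binom{[n]}{k}:n\in F\}$ the set $[n]\setminus\{n\}$ is an inclusion-minimal cover disjoint from the cover $\{n\}$. Thus I would read $\cC_\cF$ here as consisting of covers of size at most $k$ (equivalently, restrict attention to minimum covers); granting that, the count $n-|C|-|C'|\ge(k-|C|)+(k-|C'|)$ is the entire content of the claim, and the threshold $n\ge 2k$ is exactly what makes room for two disjoint $k$-sets.
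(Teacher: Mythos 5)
Your proof is correct and follows essentially the same route as the paper's: the paper also assumes the two covers are disjoint, uses $n\ge 2k$ to produce disjoint $k$-sets $F\supseteq C$ and $F'\supseteq C'$ (which belong to $\cF$ by the maximality assumption stated just before the claim), and derives a contradiction with the intersecting property. Your additional remark that covers must have size at most $k$ for the extension step is a point the paper leaves implicit (the claim is only ever applied to covers of size $2$), but it does not alter the argument.
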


\begin{proof}[Proof of Claim]
Suppose not. Then, as $n\ge 2k$, there exist $k$-sets $F,F'$ with $C\subseteq F, C'\subseteq F'$ and $F\cap F'=\emptyset$. This contradicts the intersecting property of $\cF$.
\end{proof}

We consider several cases according to $\tau(\cF)$.

\medskip

\textsc{Case I} $\tau(\cF)=1$.

Then the unique element of a singleton cover belongs to all sets of $\cF$, so $\cF$ does not contain any triangles.

\medskip

\textsc{Case II} $\tau(\cF)\ge 3$.

Then by Proposition \ref{tau}, we have $|\cF|\le k^3\binom{n-3}{k-3}$, and thus $\cN(\Delta_3,\cF)\le \binom{|F|}{3}\le k^9\binom{n-3}{k-3}^3$ which, if $n\ge 2k^{4}$, is smaller than $\frac{1}{2}\binom{n-3}{k-2}^3$, and thus than the value of $n_{3,k}$ by Proposition \ref{ordmag}.

\medskip 

\textsc{Case III} $\tau(\cF)=2$

By Claim \ref{cover}, $\cC_\cF\cap \binom{[n]}{2}$ is either a graph star $S_\ell$ with $\ell$ leaves or a graph triangle. We consider first the case when we have a star with center $c$.

Suppose first $\ell \ge 3$. Then any triangle must contain a set $F$ that does not contain $c$. As $\ell\ge 3$ and $F$ must contain all leaves, there are at most $\binom{n-\ell-1}{k-\ell}\le \binom{n-4}{k-3}$ such sets. Thus the number of triangles is at most $\binom{n-4}{k-3}\cdot \binom{|\cF|}{2}\le k^4\binom{n-4}{k-3}\binom{n-2}{k-2}^2$ by Proposition \ref{tau}. If $n\ge 4k^5$, then this is smaller than $\frac{1}{2}\binom{n-3}{k-2}^3$, and thus than $n_{3,k}$.

Suppose next $\ell=2$ with $\cC_\cF\cap \binom{[n]}{2}=\{\{c,x\},\{c,y\}\}$. As $\{x,y\}$ is not a cover, there must be a set $F'\in \cF$ with $x,y\notin F'$. On the other hand every $F\in \cF$ with $c\notin F$ must contain both $x$ and $y$ (as $\{c,x\}$ and $\{c,y\}$ are covers) and meet $F'$, so their number is not more than $k\binom{n-3}{k-3}$. So the number of triangles in $\cF$ is at most $k\binom{n-3}{k-3}\binom{|\cF|}{2}\le k^5\binom{n-3}{k-3}\binom{n-2}{k-2}^2$, which is smaller than $\frac{1}{2}\binom{n-3}{k-2}^3$ if $n\ge 4k^6$.

Finally, suppose that $\ell=1$, i.e. $\{x,y\}$ is a unique cover set of $\cF$ of size 2. We try to bound the size of $\cF_x=\{F\in \cF: x\in F,y\notin F\}$ and $\cF_y=\{F\in \cF: x\notin F,y\in F\}$. As $\{x\}$ and $\{y\}$ are not covers but $\{x,y\}$ is, they are not empty. Fix $F_y\in \cF_y$ and consider $F\in \cF_x$. All such $F$ must meet $F_y$ in some $y'\neq y$. There are $k-1$ choices for $y'$ and for each $y'$, as $\{x,y'\}$ is not a cover, there exists $F_{y'}\in \cF$ with $x,y'\notin F_{y'}$. So every $F\ni x,y'$ must meet $F_{y'}$, so there are at most $k\binom{n-3}{k-3}$ such sets. Summing over all $y'$, we obtain $|\cF_x|\le (k-1)k\binom{n-3}{k-3}$. An identical argument yields $|\cF_y|\le (k-1)k\binom{n-3}{k-3}$. Observe that every triangle must contain a set both from $\cF_x$ and $\cF_y$ - otherwise its three sets would have a common element as $\{x,y\}$ is a cover. Therefore the number of triangles in $\cF$ is at most $|\cF_x|\cdot |\cF_y|\cdot |\cF|\le k^6\binom{n-3}{k-3}^2\binom{n-2}{k-2}$ which is smaller than $\frac{1}{2}\binom{n-3}{k-2}^3\le n_{3,k}$ if $n\ge 2k^4$.

\smallskip

In all cases, we obtained that $\cN(\Delta_3,\cF)<n_{3,k}$, so we must have that $\cC_\cF\cap \binom{[n]}{2}$ is a graph triangle $\{x,y,z\}$. So every $F\in\cF$ must contain at least 2 of $x,y,z$ and thus $\cF\subseteq \cF_{\{x,y,z\},k}$. In order to have as many triangles as possible, $\cF$ must contain all sets in $\cF'_{x,y,z}$. This finishes the proof of Theorem \ref{tria}.
\end{proof}

\medskip

We continue with the proof of the general result.

\begin{proof}[Proof of Theorem \ref{gener}] Let $\cF\subseteq \binom{[n]}{k}$ be an $r$-wise intersecting family. We start with a lemma on the covering number of $\cF$.

\begin{lemma}\label{tau3}
If $\cF\subseteq \binom{[n]}{k}$ is $r$-wise intersecting with $\tau(\cF)\ge 3$, then $\cN(\Delta_{r+1},\cF)<n_{r+1,k}$.
\end{lemma}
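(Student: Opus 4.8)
The plan is to bound the size of $\cF$ and then use the trivial estimate $\cN(\Delta_{r+1},\cF)\le\binom{|\cF|}{r+1}$, which is valid because every $(r+1)$-triangle is a collection of $r+1$ distinct members of $\cF$. Measured against the lower bound $n_{r+1,k}\ge\frac12\binom{n-r-1}{k-r}^{r+1}$ of Proposition~\ref{ordmag}, what I need is an upper bound on $|\cF|$ that is smaller than the extremal order $\binom{n-r-1}{k-r}\sim n^{k-r}$. The main obstacle is that the size bound of Proposition~\ref{tau}, namely $|\cF|\le k^{\tau(\cF)}\binom{n-\tau(\cF)}{k-\tau(\cF)}\sim n^{k-\tau(\cF)}$, is far too weak in this range: it beats $n^{k-r}$ only when $\tau(\cF)>r$, whereas for the hard regime $3\le\tau(\cF)\le r$ (and $r\ge3$) it yields $\binom{|\cF|}{r+1}\sim n^{(r+1)(k-\tau(\cF))}\ge n^{(r+1)(k-r)}$. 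So a genuinely stronger bound on $|\cF|$ is required precisely when $3\le\tau(\cF)\le r$.

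The key idea is to route the intersection strength guaranteed by $\tau(\cF)\ge3$ through the cross-intersecting estimate of Proposition~\ref{cross} rather than through Proposition~\ref{tau}. Setting $t=\tau(\cF)\ge3$, Proposition~\ref{rwisetau} shows that $\cF$ is $[(r-2)(t-1)+1]$-intersecting, hence in particular $(2r-3)$-intersecting. If $k<2r-2$, then two distinct $k$-sets share at most $k-1<2r-3$ elements, so $|\cF|\le1$ and $\cF$ contains no $(r+1)$-triangle; thus I may assume $k\ge2r-2$. Now I apply Proposition~\ref{cross} with $m=2$ and $\cF_1=\cF_2=\cF$, which is legitimate since $\cF$ is cross-$(2r-3)$-intersecting with itself and $\tau(\cF)\ge3\ge2$. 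This gives
\[
|\cF|\le\frac{k}{2r-3}\binom{k}{2r-3}\binom{n-2r+2}{k-2r+2}\le k^{2r-2}\binom{n-2r+2}{k-2r+2},
\]
a bound of order $n^{k-2r+2}$, which is the crucial saving: the exponent has dropped from $k-\tau(\cF)$ to $k-2r+2$.

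It remains to compare orders of magnitude. By the trivial estimate and the size bound just obtained, $\cN(\Delta_{r+1},\cF)\le\binom{|\cF|}{r+1}\le|\cF|^{r+1}$ is of order at most $n^{(r+1)(k-2r+2)}$, whereas $n_{r+1,k}$ is of order $n^{(r+1)(k-r)}$ by Proposition~\ref{ordmag}. Since $r\ge3$ forces $k-2r+2<k-r$, the exponent of $n$ in my bound is strictly smaller, so a routine computation of the binomial ratios (using $k\ge2r-2$) yields $|\cF|\le\frac14\binom{n-r-1}{k-r}$ as soon as $n\ge4k^{r(r+1)}$, the inequality $r(r+1)\ge3r-4$ doing the bookkeeping. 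Then $\cN(\Delta_{r+1},\cF)\le|\cF|^{r+1}<\frac12\binom{n-r-1}{k-r}^{r+1}\le n_{r+1,k}$, as required. The only genuinely inventive step is feeding the parameter $2r-3$ into Proposition~\ref{cross}; the remaining estimates are order-of-magnitude bookkeeping comfortably absorbed by the generous threshold $n\ge4k^{r(r+1)}$.
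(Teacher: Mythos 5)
Your proof is correct and takes essentially the same route as the paper: both use Proposition~\ref{rwisetau} (with $\tau(\cF)\ge 3$) to conclude that $\cF$ is $(2r-3)$-intersecting, then apply Proposition~\ref{cross} with $\cF_i=\cF$ to get $|\cF|\le k^{2r-2}\binom{n-2r+2}{k-2r+2}$, and finally compare the trivial bound $\cN(\Delta_{r+1},\cF)\le\binom{|\cF|}{r+1}$ against the lower bound of Proposition~\ref{ordmag}. Your separate treatment of $k<2r-2$ (which is in fact vacuous, since $\tau(\cF)\ge 3$ forces $|\cF|\ge 2$) and your more generous threshold $n\ge 4k^{r(r+1)}$ versus the paper's $n\ge 2k^{2r-1}$ are only cosmetic differences.
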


\begin{proof}
By Proposition \ref{rwisetau}, $\cF$ is $(2r-3)$-intersecting with $\tau(\cF)>1$, so applying Proposition \ref{cross} with $\cF_i=\cF$ we obtain $|\cF|\le k\binom{k}{2r-3}\binom{n-2r+2}{k-2r+2}$. Clearly, we have $$\cN(\Delta_{r+1},\cF)\le \binom{|\cF|}{r+1}\le \left[k^{2r-2}\binom{n-2r+2}{k-2r+2}\right]^{r+1}<\frac{1}{2}\binom{n-r-1}{k-r}^{r+1}\le n_{r+1,k},$$ where the strict inequality uses $r\ge 3$ and $n\ge 2k^{2r-1}$, and the last inequality is Proposition \ref{ordmag}.
\end{proof}

If $\tau(\cF)=1$, then $\cF$ does not contain any $(r+1)$-triangles, so by Lemma \ref{tau3}, we can assume $\tau(\cF)=2$. Let $G$ be the graph with vertex set $[n$] and edge set $\cC_\cF\cap \binom{[n]}{2}$. By Proposition \ref{rwisetau}, we know that $\cF$ is $(r-1)$-intersecting with $\tau(\cF)=2$, therefore, by Proposition \ref{cross} with $\cF_i=\cF$, we obtain $|\cF|\le k^r\binom{n-r}{k-r}$. We will use this bound several times below. 

\begin{lemma}\label{P3}
If $G$ contains an induced path on 3 vertices, then $\cN(\Delta_{r+1},\cF)<n_{r+1,k}$.
\end{lemma}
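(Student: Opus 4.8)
The plan is to exhibit a single small subfamily that every $(r+1)$-triangle must meet, and then bound the triangle count by its size times a crude estimate on the remaining sets. Let $a,b,c$ be the vertices of the induced path, so that $\{a,b\}$ and $\{b,c\}$ lie in $\cC_\cF$ but $\{a,c\}$ does not. Since $\{a,c\}$ is not a cover, I fix $F_0\in\cF$ with $a,c\notin F_0$; as $F_0$ must meet the cover $\{a,b\}$, we get $b\in F_0$.

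The crucial step is to bound the family $\cF_{\bar b}:=\{F\in\cF:b\notin F\}$. Every such $F$ must meet both covers $\{a,b\}$ and $\{b,c\}$, hence contains both $a$ and $c$. The key gain comes from Proposition~\ref{rwisetau} applied with $t=\tau(\cF)=2$, which shows $\cF$ is $(r-1)$-intersecting, so $|F\cap F_0|\ge r-1$; as $b\in F_0\setminus F$, this forces $F$ to contain at least $r-1$ of the $k-1$ elements of $F_0\setminus\{b\}$. Specifying $a$, $c$ and an $(r-1)$-subset of $F_0\setminus\{b\}$ lying in $F$ leaves only $k-r-1$ free coordinates, whence $|\cF_{\bar b}|\le\binom{k-1}{r-1}\binom{n-r-1}{k-r-1}$.

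Next, every $(r+1)$-triangle contains at least one member of $\cF_{\bar b}$: if all $r+1$ of its sets contained $b$, then $b$ would lie in their common intersection, contradicting that this intersection is empty. Selecting this distinguished set and then the remaining $r$ sets arbitrarily from $\cF$ yields $\cN(\Delta_{r+1},\cF)\le|\cF_{\bar b}|\binom{|\cF|}{r}$. Inserting the bound above together with $|\cF|\le k^r\binom{n-r}{k-r}$ established before the lemma, and using the elementary comparisons $\binom{n-r-1}{k-r-1}\le\frac{k}{n-k}\binom{n-r-1}{k-r}$ and $\binom{n-r}{k-r}\le 2\binom{n-r-1}{k-r}$, collapses the estimate to a constant multiple of $n^{-1}k^{r(r+1)}\binom{n-r-1}{k-r}^{r+1}$. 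Since $n\ge 4k^{r(r+1)}$, this falls below $\tfrac12\binom{n-r-1}{k-r}^{r+1}\le n_{r+1,k}$ by Proposition~\ref{ordmag}, which is the desired conclusion.

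The main obstacle is making $|\cF_{\bar b}|$ genuinely small. The naive estimate using only that $\cF$ is intersecting gives $|\cF_{\bar b}|\le k\binom{n-3}{k-3}$, which saves only a factor of roughly $n^{-1}$ beyond a fixed power of $k$ and is therefore too weak to beat $\binom{|\cF|}{r}\sim k^{r^2}\binom{n-r}{k-r}^r$ once $r\ge 3$. Exploiting the $(r-1)$-intersecting property to force $|F\cap F_0|\ge r-1$ is precisely what upgrades the bound to one with $k-r-1$ free coordinates, supplying the extra powers of $k$ in the denominator that the threshold $n\ge 4k^{r(r+1)}$ is tailored to absorb.
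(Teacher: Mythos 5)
Your proof is correct and follows essentially the same route as the paper's: identify the center of the induced path, note that every $(r+1)$-triangle contains a set avoiding the center, show such sets must contain both endpoints and (via Proposition~\ref{rwisetau} with $\tau(\cF)=2$ and a set avoiding both endpoints) at least $r-1$ elements of a fixed $k$-set, giving the bound $\binom{k-1}{r-1}\binom{n-r-1}{k-r-1}$, and then multiply by the number of ways to choose the remaining $r$ sets. Your minor refinements (using $b\in F_0$ to get $\binom{k-1}{r-1}$ rather than $\binom{k}{r-1}$, and $\binom{|\cF|}{r}$ rather than $|\cF|^r$) are what let the final computation absorb the lossy factor $2^r$ from the crude comparison $\binom{n-r}{k-r}\le 2\binom{n-r-1}{k-r}$, so the constants do close under $n\ge 4k^{r(r+1)}$, just as in the paper.
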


\begin{proof}
Suppose $xc,yc$ are edges of $G$ but $xy$ is not. Then any $(r+1)$-triangle must contain a set $F\in \cF$ with $c\notin F$. As $xc$ and $yc$ are covers of $\cF$, all such sets must contain both $x$ and $y$. Furthermore, as $xy$ is not a cover of $\cF$, there must exist $F'\in F$ with $x,y\notin F'$. By Proposition \ref{rwisetau}, we have $|F\cap F'|\ge r-1$, so the number of possible $F$s is at most $\binom{k}{r-1}\binom{n-r-1}{k-r-1}$. Therefore, the number of $(r+1)$-triangles in $\cF$ is at most $$\binom{k}{r-1}\binom{n-r-1}{k-r-1}|\cF|^r\le k^{r^2+r-1}\binom{n-r-1}{k-r-1}\binom{n-r}{k-r}^r<\frac{1}{2}\binom{n-r-1}{k-r}^{r+1}\le n_{r+1,k},$$
where we used $n\ge 4k^{r^2+r}$ for the strict inequality.
\end{proof}

\begin{lemma}\label{maxdeg}
If $G$ has maximum degree larger than $r$, then $\cN(\Delta_{r+1},\cF)<n_{r+1,k}$.
\end{lemma}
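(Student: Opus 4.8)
The plan is to exploit a vertex of high degree in $G$ to show that an $(r+1)$-triangle is forced to use a set from a very small subfamily, which makes triangles too scarce. Suppose $c$ is a vertex of $G$ of degree larger than $r$, and fix $r+1$ distinct neighbours $x_1,\dots,x_{r+1}$ of $c$, so that each pair $\{c,x_i\}$ lies in $\cC_\cF$, i.e.\ is a $2$-cover of $\cF$. Throughout I work in the standing case $\tau(\cF)=2$, where the bound $|\cF|\le k^r\binom{n-r}{k-r}$ has already been established.

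The first observation is that every $(r+1)$-triangle $\{F_1,\dots,F_{r+1}\}\subseteq\cF$ must contain a set avoiding $c$: otherwise $c\in\bigcap_{i=1}^{r+1}F_i$, contradicting $\bigcap_i F_i=\emptyset$. The second, decisive, observation is that any $F\in\cF$ with $c\notin F$ is severely constrained. Since each $\{c,x_i\}$ is a cover, $F$ must meet it, and as $c\notin F$ this forces $x_i\in F$ for every $i$. Hence $F\supseteq\{x_1,\dots,x_{r+1}\}$ while $c\notin F$, so the number of such $F$ is at most $\binom{n-r-2}{k-r-1}$. Combining the two observations, I would bound the number of triangles by first selecting the guaranteed $c$-avoiding set and then the remaining $r$ sets freely from $\cF$, giving
$$\cN(\Delta_{r+1},\cF)\le\binom{n-r-2}{k-r-1}\,|\cF|^r\le k^{r^2}\binom{n-r-2}{k-r-1}\binom{n-r}{k-r}^r.$$

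The gain over the target $\tfrac12\binom{n-r-1}{k-r}^{r+1}\le n_{r+1,k}$ comes from $\binom{n-r-2}{k-r-1}$ carrying one fewer factor of $n$ than $\binom{n-r-1}{k-r}$: indeed $\binom{n-r-2}{k-r-1}=\tfrac{k-r}{n-r-1}\binom{n-r-1}{k-r}$, and $\binom{n-r}{k-r}\le 2\binom{n-r-1}{k-r}$ for $n\ge 2k$, so the right-hand side is at most $\tfrac{2^r k^{r^2}(k-r)}{n-r-1}\binom{n-r-1}{k-r}^{r+1}$. This is below $\tfrac12\binom{n-r-1}{k-r}^{r+1}$ once $n-r-1\ge 2^{r+1}k^{r^2}(k-r)$, which holds comfortably under the hypothesis $n\ge 4k^{r(r+1)}$ since $k\ge r\ge 3$.

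This is a short counting estimate rather than a delicate argument, so there is no deep obstacle; the two points to get right are the structural step that a set omitting $c$ must contain all $r+1$ chosen neighbours—this is exactly where degree strictly larger than $r$ is used, as it supplies $r+1$ covers through $c$—and verifying that the resulting factor of order $1/n$ beats the constant $2^{r+1}k^{r^2}(k-r)$ under the stated threshold. I note that, unlike \lref{P3}, this argument does not require the absence of an induced $P_3$; it uses only the high-degree vertex together with the $\tau(\cF)=2$ bound on $|\cF|$.
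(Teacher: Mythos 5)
Your proof is correct and follows essentially the same route as the paper's: every $(r+1)$-triangle must contain a set avoiding $c$, any such set must contain all (at least $r+1$) neighbours of $c$ because each pair $\{c,x_i\}$ is a cover, and this gives the bound $\binom{n-r-2}{k-r-1}|\cF|^r \le k^{r^2}\binom{n-r-2}{k-r-1}\binom{n-r}{k-r}^r$, exactly as in the paper. The only difference is that you carry out the final comparison against $\tfrac12\binom{n-r-1}{k-r}^{r+1}$ explicitly (a computation the paper leaves implicit), and your verification of it under the stated threshold is sound.
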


\begin{proof}
Suppose $c$ has degree at least $r+1$ in $G$. Any $(r+1)$-triangle must contain a set $F\in \cF$ with $c\notin F$. All such $F$ must contain all neighbors of $c$ as any neighbor together with $c$ form a cover of $\cF$. Therefore, the number of such sets is at most $\binom{n-r-2}{k-r-1}$. Thus the number of $(r+1)$-triangles in $\cF$ is at most 
$$\binom{n-r-2}{k-r-1}|\cF|^r\le k^{r^2}\binom{n-r-2}{k-r-1}\binom{n-r}{k-r}^r<\frac{1}{2}\binom{n-r-1}{k-r}^{r+1}\le n_{r+1,k},$$
where we used $n\ge 4k^{r^2+1}$ for the strict inequality.
\end{proof}

Lemma \ref{P3} yields that all components of $G$ are cliques, and Lemma \ref{maxdeg} shows that no component has size more than $r+1$.

\begin{lemma}
If $G$ contains a component of size smaller than $r+1$ that is not an isolated vertex, then $\cN(\Delta_{r+1},\cF)<n_{r+1,k}$.
\end{lemma}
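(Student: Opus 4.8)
The plan is to exploit the maximality of the clique component to build a cross-intersecting configuration that Proposition~\ref{cross} can control. Write $K=\{v_1,\dots,v_s\}$ for the component, so $2\le s\le r$, every pair inside $K$ is a cover of $\cF$, and no edge of $G$ leaves $K$. The first observation I would record is that, since each pair in $K$ is a cover, no set of $\cF$ can miss two elements of $K$; equivalently $|F\cap K|\ge s-1$ for all $F\in\cF$, so each $F$ is deficient in at most one coordinate of $K$. For $j=1,\dots,s$ I then introduce the family $\cG_j=\{F\in\cF:v_j\notin F\}$, whose members all satisfy $F\cap K=K\setminus\{v_j\}$, and pass to the traces $\overline{\cG}_j=\{F\setminus K:F\in\cG_j\}\subseteq\binom{[n]\setminus K}{k-s+1}$; the trace map is a bijection on each $\cG_j$, and since $\{v_j\}$ is not a cover each $\cG_j$ is nonempty.

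The heart of the argument is to show that these trace families are cross-$(r-s+1)$-intersecting and that each has covering number at least $2$. For the cross-intersecting part I would take $F\in\cG_i$, $F'\in\cG_j$ with $i\neq j$, note $F\cap F'\cap K=K\setminus\{v_i,v_j\}$ has size $s-2$, and combine this with $|F\cap F'|\ge r-1$ (which holds because $\cF$ is $(r-1)$-intersecting by Proposition~\ref{rwisetau}) to get $|(F\setminus K)\cap(F'\setminus K)|\ge r-s+1\ge 1$. For the covering number I argue by contradiction: if a single $w\in[n]\setminus K$ met every trace in $\overline{\cG}_j$, then every $F\in\cF$ would contain $v_j$ or $w$, making $\{v_j,w\}$ a cover and hence $v_jw$ an edge of $G$ joining $K$ to the outside, contradicting that $K$ is a whole component. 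This is exactly where both the maximality of $K$ and the hypothesis $s\ge 2$ (so that at least two distinct trace families exist to be cross-intersected) get used. Feeding $\overline{\cG}_j$ and any $\overline{\cG}_{j'}$ into Proposition~\ref{cross} with ground set $[n]\setminus K$, uniformity $k-s+1$ and parameter $t=r-s+1$ then yields $|\cG_j|=|\overline{\cG}_j|\le k^{r+1}\binom{n-r-2}{k-r-1}$, a bound smaller than $\binom{n-r-1}{k-r}$ by a factor of order $n/k^{r+2}$.

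To finish I would observe that in any $(r+1)$-triangle the empty total intersection forces, for each $v_j\in K$, some member to miss $v_j$; since every set misses at most one element of $K$, these are $s$ distinct sets, one from each $\cG_j$. Hence $\cN(\Delta_{r+1},\cF)\le\big(\prod_{j=1}^s|\cG_j|\big)|\cF|^{r+1-s}$, and inserting the bound above together with $|\cF|\le k^r\binom{n-r}{k-r}$ and the elementary comparisons $\binom{n-r-2}{k-r-1}\le\frac{k}{n-r-1}\binom{n-r-1}{k-r}$ and $\binom{n-r}{k-r}\le 2\binom{n-r-1}{k-r}$ gives, since each $\cG_j$-factor is below $1$ while each $\cF$-factor is at least $1$ and the product is therefore largest at $s=2$, an estimate of the shape $\big(k^{r+2}/(n-r-1)\big)^2(2k^r)^{r-1}\binom{n-r-1}{k-r}^{r+1}$. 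For $n\ge 4k^{r(r+1)}$ this falls below $\tfrac12\binom{n-r-1}{k-r}^{r+1}\le n_{r+1,k}$ by Proposition~\ref{ordmag}.

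I expect the main obstacle to be the middle step: correctly pinning down the cross-intersecting parameter $t=r-s+1$ of the trace families and, above all, deducing $\tau(\overline{\cG}_j)\ge 2$ from the maximality of $K$, since that is precisely what upgrades the weak per-set information into the strong size bound supplied by Proposition~\ref{cross}. By contrast, the closing arithmetic is routine once one notes that inserting an extra $\cG_j$-factor only shrinks the product, so it suffices to verify the worst case $s=2$.
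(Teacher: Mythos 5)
Your proposal is correct and follows essentially the same route as the paper: the same families $\cG_j$ of sets missing one clique vertex, the same passage to trace families with $\tau\ge 2$ deduced from $K$ being an entire component, the same application of Proposition~\ref{cross}, and the same count $\cN(\Delta_{r+1},\cF)\le\bigl(\prod_j|\cG_j|\bigr)|\cF|^{r+1-s}$ maximized at $s=2$. The only local deviation is your justification of the cross-$(r-s+1)$-intersecting property, derived pairwise from the $(r-1)$-intersection given by Proposition~\ref{rwisetau}, whereas the paper proves an $s$-wise version directly from the $r$-wise intersecting property and $\tau(\cF)\ge 2$; both are valid, since Proposition~\ref{cross} applied with $m=2$ needs only the pairwise condition.
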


\begin{proof}
Without loss of generality we can assume that $1,2,\dots,m$ form a clique in $G$ with $m\le r$. Therefore every set $F\in \cF$ meets $[m]$ in at least $m-1$ vertices. Also, every $(r+1)$-triangle must contain a set $F_i$ with $i \notin F_i$ for all $i=1,2,\dots,m$. Let $\cF_i$ denote $\{F\in \cF:i\notin F\}$ and $\cF'_i=\{F\setminus [m]:F\in \cF_i\}$, $\cF'=\cup_{i=1}^m\cF'_i$. Then $|\cF_i|=|\cF'_i|$ and $\cF'_i$, $\cF'$ are $(k-m+1)$-uniform.

Observe first that we have $\tau(\cF'_i)\ge 2$ for all $i$. Indeed, if $\{x\}$ was a cover of $\cF'_i$, then $x\notin [m]$ and $\{i,x\}$ would be a cover of $\cF$, so an edge in $G$, but $x$ and $i$ are in different components of $G$.

Next we claim that the $\cF'_i$s are cross-$(r-m+1)$-intersecting (as $m\le r$, this a positive number). Indeed, if for some $F'_1\in \cF'_1,F'_2\in\cF'_2,\dots,F'_m\in \cF'_m$ we had $|\cap_{i=1}^mF'_i|\le r-m$, then for any $x\in \cap_{i=1}^mF'_i=:X$ there exists $F_x\in \cF$ with $x\notin F_x$, because $\tau(\cF)\ge 2$. Then $\cap_{i=1}^mF_i\cap_{x\in X}F_x =\emptyset$, where $F_i\in \cF_i$ with $F_i\setminus [m]=F'_i$. This contradicts the $r$-wise intersecting property of $\cF$ and thus proves our claim.

Applying Proposition \ref{cross}, we obtain  $|\cF_i|=|\cF'_i| \le \frac{k-m+1}{r-m+1}\binom{k-m+1}{r-m+1}\binom{n-r+m-2}{k-r-1}\le k^{r-m+2}\binom{n-r+m-2}{k-r-1}$. 
Observe that $$\cN(\Delta_{r+1},\cF)\le |\cF|^{r+1-m}\cdot \prod_{i=1}^m|\cF_i|\le \left(k^r\binom{n-r}{k-r}\right)^{r+1-m}\cdot \left[k^{r-m+2}\binom{n-r+m-2}{k-r-1}\right]^m$$

This last expression decreases with $m$, so takes its maximum at $m=2$. So the number of $(r+1)$-triangles in $\cF$ is at most $$k^{r(r+1)}\binom{n-r}{k-r}^{r-1}\binom{n-r}{k-r-1}^2<\frac{1}{2}\binom{n-r-1}{k-r}^{r+1}\le n_{r+1,k}$$
if $n\ge 4k^{\frac{r(r+1)+2}{2}}$ holds.
\end{proof}
We obtained that unless $G$ consists only of cliques of size $r+1$, the number of $(r+1)$-triangles in $\cF$ is strictly smaller than $n_{r+1,k}$. On the other hand, if $X$ is the vertex set of a clique of size $r+1$ in $G$, then $\cF \subseteq \cF_{X,k}$ as every $F\in \cF$ must meet $X$ in at least $|X|-1$ elements to intersect all edges of the clique. To contain as many $(r+1)$-triangles as possible, we must have $\cF'_{X,k}\subseteq \cF$. This finishes the proof of Theorem \ref{gener}.
\end{proof}

\medskip

It would be interesting to obtain a different proof of Theorems \ref{tria} and \ref{gener} that results in a smaller threshold on $n$ with respect to $k$ and $r$. Also, the non-uniform version of these theorems are of interest. We conjecture that an analogous statement should hold.

\begin{conjecture}
 For any $r\ge 2$ there exists $n_0=n_0(r)$ such that if $\cF\subseteq 2^{[n]}$ is $r$-wise intersecting with $n\ge n_0$, then the number of $(r+1)$-triangles in $\cF$ is at most $\cN(\Delta_{r+1},\cF_X)$, where $\cF_X=\{F\subseteq [n]: |F\cap X|\ge |X|-1\}$ for some $(r+1)$-set $X$. 
\end{conjecture}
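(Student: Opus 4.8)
The plan is to transplant the architecture of the proof of Theorem~\ref{gener} into $2^{[n]}$: a covering-number dichotomy, an analysis of the graph of two-element covers, and a closing monotonicity step; the difference is that every binomial cardinality estimate (Propositions~\ref{tau}--\ref{cross}) must be replaced by a genuinely non-uniform count. First I would normalize. Enlarging a member of an $r$-wise intersecting family keeps it $r$-wise intersecting (any $r$ enlarged sets intersect because their subsets already do) and cannot destroy an $(r+1)$-triangle, so I may assume $\cF$ is an up-set; and I may assume $\tau(\cF)\ge 2$, since $\tau(\cF)=1$ leaves no $(r+1)$-triangle at all. With $|X|=r+1$ the target is exact, and a direct count gives it in closed form: in $\cF_X$ the $r+1$ sets of a triangle must omit distinct elements of $X$ and contain the remaining $r$, while their restrictions to $[n]\setminus X$ range over all tuples with empty common intersection, so
\[
\cN(\De_{r+1},\cF_X)=(2^{r+1}-1)^{\,n-r-1}.
\]
The endgame is then pure monotonicity: once the structural analysis forces $\cF\subseteq\cF_{X}$ for some $(r+1)$-set $X$, the bound $\cN(\De_{r+1},\cF)\le\cN(\De_{r+1},\cF_X)$ is immediate, so no constant has to be extracted in the extremal configuration itself.

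The counting engine is a \emph{private-witness} description of triangles. If $T_1,\dots,T_{r+1}$ is an $(r+1)$-triangle then for each $j$ the other $r$ sets share a common point $w_j$, which lies outside $T_j$; the $w_j$ are distinct, so they form an $(r+1)$-set $W$ with $T_j\cap W=W\setminus\{w_j\}$ for all $j$. Writing $\cL'(W,w)=\{A\subseteq[n]\setminus W:(W\setminus\{w\})\cup A\in\cF\}$ for the outer links, every triangle is, after relabelling, a tuple $(A_w)_{w\in W}$ with $A_w\in\cL'(W,w)$ and $\bigcap_{w}A_w=\emptyset$, the last condition being precisely what a triangle adds beyond the witness pattern. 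This yields the master inequality $\cN(\De_{r+1},\cF)\le\sum_{W}t(W)$, where $t(W)$ is the number of such tuples for fixed $W$; for $\cF=\cF_X$ the single term $t(X)=(2^{r+1}-1)^{n-r-1}$ already counts every triangle exactly once, recovering the displayed value. On the structural side the skeleton is unchanged: one first proves a non-uniform analogue of Claim~\ref{cover}, so that the two-element minimal covers form a graph $G$; then the analogues of Lemma~\ref{P3} and Lemma~\ref{maxdeg} rule out an induced path on three vertices and a vertex of degree exceeding $r$, forcing the non-isolated part of $G$ to be a single clique of size $r+1$, whence $\cF\subseteq\cF_X$.

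The real difficulty, and the reason the statement is only conjectured, is proving the counting behind this skeleton. In the uniform setting one bounds a triangle count by (number of special sets)$\times\abs{\cF}^{r}$ using $\abs{\cF}\le k^{r}\binom{n-r}{k-r}$; in $2^{[n]}$ this collapses, because an $r$-wise intersecting family can have $\abs{\cF}$ as large as $2^{n-1}$, so even $\binom{\abs{\cF}}{r+1}$ exceeds the target $(2^{r+1}-1)^{n-r-1}$ by the exponential factor $(1-2^{-(r+1)})^{-n}$. The entire saving must therefore come from the condition $\bigcap_w A_w=\emptyset$ — exactly the per-element factor $2^{r+1}-1$ in place of $2^{r+1}$ — and crude size bounds, which ignore it, are hopeless. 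Worse, the master bound $\sum_W t(W)$ over-counts: a single triangle typically admits many witness sets $W$ (replacing an element of $W$ by an outside point that happens to be private still gives a witness set), and already for $\cF=\cF_X$ the terms with $W$ near $X$ inflate $\sum_W t(W)$ past the truth by a factor polynomial in $n$. The crux is thus a \emph{stability} statement I would try to establish: the $r$-wise intersecting property must force the sets $W$ with large links $\cL'(W,\cdot)$ to be few and concentrated on one $(r+1)$-clique, so that both the over-count and the off-extremal cases ($\tau(\cF)\ge3$, or $G$ containing an induced $P_3$, a high-degree vertex, or a clique of size below $r+1$) lose an exponential factor against $(2^{r+1}-1)^{n-r-1}$. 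The natural lever is the extra intersection from Proposition~\ref{rwisetau} --- $\cF$ is $(r-1)$-intersecting when $\tau(\cF)=2$ and $(2r-3)$-intersecting when $\tau(\cF)\ge3$ --- which makes the outer parts $\{A_w\}$ pairwise $(r-2)$-intersecting and shrinks the links through an up-set cross-intersecting estimate; converting this local gain into a global count that dominates the $\binom{n}{r+1}$-fold sum over $W$ is the main obstacle. Finally, $r=2$ needs a separate treatment, since there Proposition~\ref{rwisetau} is vacuous; one mimics the case analysis of Theorem~\ref{tria} directly, with the private-witness count in place of the binomial bounds.
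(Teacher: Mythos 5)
This statement is the paper's closing \emph{conjecture}: the authors offer no proof of it, and explicitly leave it open. So there is no proof to compare yours against; the only question is whether your argument settles the conjecture, and by your own account it does not. To your credit, the parts you make concrete are correct. The up-closure reduction is valid (adding all supersets preserves the $r$-wise intersecting property and only adds triangles). Your exact count $\cN(\De_{r+1},\cF_X)=(2^{r+1}-1)^{n-r-1}$ is right: since each member of $\cF_X$ omits at most one element of $X$ and the total intersection of a triangle must miss every element of $X$, the $r+1$ sets omit distinct elements of $X$ bijectively, every $r$-wise intersection is then automatic (witnessed inside $X$), and each of the $n-r-1$ outside elements may lie in any pattern of the $r+1$ sets except all of them. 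The private-witness structure is also sound: any $w_j\in\cap_{i\neq j}T_i$ is automatically outside $T_j$ because the total intersection is empty, and the $w_j$ are pairwise distinct.

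The gap is the one you name, and it is fatal to the approach as written. Every quantitative step in the paper's proof of Theorem~\ref{gener} runs through Propositions~\ref{tau} and \ref{cross}, i.e.\ bounds of the form $|\cF|\le k^r\binom{n-r}{k-r}$, where multiplying a ``special-set'' count by $|\cF|^r$ loses only a polynomial factor in $n$ against $n_{r+1,k}$. Non-uniformly $|\cF|$ can be as large as $2^{n-1}$, so all the saving must come from the empty-intersection condition (an exponential saving per element, the factor $2^{r+1}-1$ versus $2^{r+1}$), and you supply no lemma that extracts it: the ``stability statement'' you say you would try to establish --- that sets $W$ with large links are few and concentrated on a single $(r+1)$-clique --- is essentially the conjecture in disguise, so invoking it is circular. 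Worse, your master inequality $\cN(\De_{r+1},\cF)\le\sum_W t(W)$ is already too lossy for the \emph{exact} statement: as you yourself compute, at $\cF=\cF_X$ the witness sets obtained by swapping one element of $X$ for an outside point contribute on the order of $n\cdot(2^{r+1}-1)^{n-r-2}$, inflating the bound past the truth by a polynomial factor that no downstream case analysis can recover. Finally, the structural skeleton (analogues of Claim~\ref{cover}, Lemma~\ref{P3}, Lemma~\ref{maxdeg}, and the small-clique lemma) is asserted rather than proved, and each of those would now need error terms exponentially small relative to $(2^{r+1}-1)^{n-r-1}$ rather than the paper's polynomial slack. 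In short: a reasonable program with a correct computation of the extremal value and a correct normalization, but the conjecture remains open and this attempt does not prove it.
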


\bigskip

\noindent\textbf{Acknowledgement.} We would like to thank Stijn Cambie for pointing out some calculation errors in the first version of our manuscript.

\end{document}